\newtheorem{thm}{Theorem}[section]
\newtheorem{prop}[thm]{Proposition}
\newtheorem{cor}[thm]{Corollary}
\theoremstyle{definition}
\newtheorem{example}[thm]{Example}
\newtheorem{definition}[thm]{Definition}
\DeclareMathOperator{\des}{des}
\title{The Range of The Des Statistic for Conjugacy Classes in $S_n$}
\author{Yisca Kares }
\thanks{Department of Applied Mathematics, Jerusalem College of Technology, Jerusalem, Israel \\
Michlalah-Jerusalem College, Department of Mathematics, Jerusalem, Israel\\
Department of Mathematics, Open University, Ra'anana, Israel}
\date{}
\begin{document}

\maketitle

\begin{abstract}
We determine the range of the des statistic on every conjugacy class in the symmetric group $S_n$, prove that the minimum is $1$ (except for the identity class), and show that every intermediate value from $1$ to the maximum value is attained.

We also demonstrate a constructive method to achieve every value in the range and discuss its combinatorial implications.
\end{abstract}

\section{Introduction}
Let $S_n$ be the symmetric group on $n$ elements, and let $\pi=[\pi(1) \pi(2) ... \pi(n)]$ be a permutation in $S_n$.

 We say that $i \in \{1,\dots,n-1\}$ is a $descent$ of a permutation $\pi \in S_n$ if $\pi(i)>\pi(i+1)$. 
 Denote $Des(\pi)$ the set of the descents of $\pi$ and $des(\pi)=| Des(\pi)|$ as the number of the descent of $\pi$. 

The $des$ statistic discussed in this work has been extensively studied on the entire symmetric group. Our goal is to restrict attention to individual conjugacy classes and analyze more closely the behavior of the statistic within them, determining both the minimum value and the full range of attained values. 

One of the first to define and study this permutation statistic was Euler.

The distribution function of the des statistic, denoted \(A_n(q)\), is known as the Eulerian p olynomial and is defined via the generating function
\[
\sum_{n \geq 0} A_n(q) \frac{x^n}{n!} = \frac{q-1}{q - e^{(q-1)x}}.
\]

Stieltjes \cite{Stieltjes} established a connection between Eulerian polynomials and continued fractions, given by:
\[
\sum_{n \geq 0} A_n(q) z^n = \frac{1}{1 - \frac{z}{1 - \frac{qz}{1 - \frac{2z}{1 - \frac{2qz}{1 - \cdots}}}}}.
\]
Another property associated with Eulerian polynomials is the property due to Rolle \cite{Rolle}, which states that all roots of the Eulerian polynomial are real, negative, and pairwise distinct.

Euler also introduced the Eulerian numbers \(A_{n,k}\), defined as follows:
\[
A_{n,k} := | \{ \pi \in S_n \mid \text{des}(\pi) = k \} |
\]

Euler \cite{Euler} showed that
$
A_n(q) = \sum_{k \geq 0} A_{n,k} q^k 
$.

Alternatively, using the recurrence relation $ A_{n,k} = (n-k) A_{n-1,k-1} + (k+1) A_{n-1,k} $, shown in 
\cite{Petersen}, one can derive the identity \(A_n(q) = \sum_{k \geq 0} A_{n,k} q^k\). From this, it follows that for all \(n, k\) with \(0 \leq k \leq n\), we have \(0 < A_{n,k}\).
Thus, for all \(n, k\), the coefficient of \(q^k\) in the Eulerian polynomial \(A_n(q)\) is positive. This ensures that every value \(0 \leq k \leq n\) appears as the descent number of some permutation in \(S_n\).
This property guarantees that all values between minimum and maximum are realized, is the continuity property of the $des$ statistic in the symmetric group \(S_n\).

Cellini \cite{Cellini} derived an explicit formula for the Eulerian numbers.

The distributions of statistics on fixed conjugacy classes of \(S_n\) are known
exactly for some classical statistics:
Gessel and Reutenauer \cite{GR} derived a generating function for the joint distribution of descents and the major index over the conjugacy class. 
Brenti \cite{Brenti} provided a generating function for the excedance statistic on conjugacy classes in terms of Eulerian polynomials. 
Several asymptotic results have been established. Fulman \cite{Ful} proved that the distributions of descents and the major index converge to a normal distribution in conjugacy classes with sufficiently large cycles. Kim and Lee \cite{Kim}
later extended this result to all conjugacy class of \(S_n\). 
Loth, Levet, Liu, Stucky, Sundaram, and Yin \cite{LLS} studied the expectations of various permutation statistics, treating them as random variables.

The paper is organized as follows: Section 2 establishes the minimal value attained in each conjugacy class. In Section 3, we prove that in every conjugacy class, except for the class of $\pi=e$, each value from 1 to the maximum value of the des statistic in that class is attained.

If ${C}_n= \{e \}$, the result holds trivially, since $ \{ \des (\pi )|\pi\in {C}_n \} = \{0 \}$. Therefore, throughout the proofs, we assume that ${C}_n\ne \{e \}$.

\section{Minimum value}

In this section, we establish the following result.

\begin{thm}\label{1des}
Let ${C}_n\ne \{ e \}$ be a conjugacy class in the symmetric group $S_{n}$. Then 
\[
\min_{\pi\in C_{n}} des(\pi)=1
\]
\end{thm}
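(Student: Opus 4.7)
The proof splits into two parts. First, a permutation with $\des(\pi)=0$ satisfies $\pi(1)<\pi(2)<\cdots<\pi(n)$ and hence must be the identity; since $C_n\neq\{e\}$, every $\pi\in C_n$ has $\des(\pi)\geq 1$. So it suffices to exhibit a single $\pi\in C_n$ with $\des(\pi)=1$.

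Write $\lambda=(\lambda_1\geq\lambda_2\geq\cdots\geq\lambda_k)$ for the cycle type indexing $C_n$; since $C_n\neq\{e\}$, we have $\lambda_1\geq 2$. My plan is to construct $\pi$ via a \emph{reverse column fill} of the Young diagram $Y(\lambda)$: label its cells with $1,2,\ldots,n$ column by column, starting from the rightmost column and proceeding leftward, going top to bottom within each column. Let $S_i\subseteq\{1,\ldots,n\}$ collect the labels in row $i$, so $|S_i|=\lambda_i$ and the $S_i$ partition $\{1,\ldots,n\}$. Define $\pi$ to act on each $S_i$ as the cyclic shift in increasing order (smallest to next smallest, and largest back to smallest); then $\pi$ has cycle type $\lambda$ by construction.

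The remaining task is to verify $\des(\pi)=1$. Let $\lambda'$ be the conjugate partition and set $L_j=\lambda'_{j+1}+\cdots+\lambda'_{\lambda_1}$, the number of cells strictly to the right of column $j$, so that $L_0=n$, $L_{\lambda_1}=0$, and $j\mapsto L_j$ is strictly decreasing. A short computation shows that the cell at row $i$, column $j$ receives the label $L_j+i$, and the cyclic action yields $\pi(L_j+i)=L_{j-1}+i$ for $j\geq 2$, together with $\pi(L_1+i)=L_{\lambda_i}+i$.

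To finish, I would inspect each consecutive pair of labels $(p,p+1)$ in $\{1,\ldots,n-1\}$: these either sit in the same column (adjacent rows) or bridge the bottom of column $j$ and the top of column $j-1$. Using the monotonicity of $j\mapsto L_j$ and of $\lambda'$, one checks that $\pi(p)<\pi(p+1)$ in every configuration except the transition from column $2$ to column $1$, where $\pi(p)=L_1+\lambda'_2$ and $\pi(p+1)=L_{\lambda_1}+1=1$; this occurs at position $p=L_2+\lambda'_2=n-k$ and produces the unique descent. The main obstacle will be carrying out this case analysis carefully, and in particular checking that the column-$2$-to-column-$1$ transition actually exists, which is exactly the hypothesis $\lambda_1\geq 2$, i.e., $C_n\neq\{e\}$.
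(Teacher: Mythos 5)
Your proof is correct and takes essentially the same approach as the paper: both arguments fill the Young diagram of the cycle type column by column, turn the rows into cycles, and check that the unique descent occurs at the boundary between the first two columns (which exists precisely because $C_n\neq\{e\}$). Your right-to-left, top-to-bottom filling assigns to each cell the complement $n+1-a$ of the paper's label $a$, so your permutation is exactly the reverse-complement of the paper's $\tau$, and the two constructions coincide up to this standard $\des$-preserving symmetry.
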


\begin{definition}
Let $n\in \mathbb{N}$, and let $\lambda = ( a_1,...,a_t )\vdash n$, that is, $\lambda$ is a partition of $n$, i.e., $a_1+...+a_t=n$ and $0< a_{i+1}\le a_i$ for all $i\in [1,t-1]$. The {\em Young diagram} $Y_\lambda$ corresponding to the partition $\lambda$, consists of $n$ cells in the plane, arranged in left-justified rows, where the $i$-th row has $a_i$ cells.

\end{definition}

\begin{proof}[proof of theorem \ref{1des}]
The identity permutation $e$ admits a singleton conjugacy class, and it is the only permutation with $des(\pi)=0$. We consider conjugacy classes ${C}_n\ne \{ e \}$.

Let $C_n\ne \{ e \}$. We now show that there exists a permutation $\tau\in C_{n}$ such that $des (\tau)=1$.

Conjugacy classes in ${S}_n$ are determined by their cycle structures, which are partitions of n. Let $ (a_{1},...,a_{k} )$ be the cycle structure of ${C}_n$ such that $0<a_{i+1}\le a_{i}$ for all $1\le i<k$.

Let ${Y}_\lambda$ be the Young diagram corresponding to the cycle structure $\lambda= (a_{1},...,a_{k} )\vdash n$. We fill the boxes of the diagram as follows: the numbers $1,...,n$ will be filled from the leftmost column to the rightmost column, with each column filled from bottom to top in order.

We now write each row in this diagram as a cycle, and let $\tau^{-1}$ be the permutation formed by composing these cycles in reverse order.

We claim that $des (\tau )=1$:


By the construction, we observe that for all $1\le i\le n$ if $i$ does not appear in the first column or the first row of the diagram, then $\tau (i )<\tau (i+1 )$. 

The value ${k}$, which is the number of cycles in $\tau$, appears in the first position in the first column (see example \ref{exa} below). For every ${i}\ne{k}$ appearing in the first column, the value $\tau(i+1)$ appears in the row above the value $\tau(i)$, and in the same column or to the right. Therefore, in this case also, $\tau (i )<\tau (i+1 )$. For every ${i}\ne{k}$ appearing in the first row, the value $\tau(i+1)$ is in a column to the right of the value $\tau(i)$, so again $\tau (i )<\tau (i+1 )$.

Because ${C}_n\ne {e}$, the Young diagram has at least two columns, i.e., ${k}<{n}$ hence we get $\tau (k )=n>\tau (k+1 )$.

Therefore, $des (\tau)=1$, and so $\min_{\pi\in C} des(\pi)=1$.
\end{proof}

\begin{example}\label{exa}

let ${C}_n$ with a cycle structure $ (4,4,3 )\vdash11$.

 The filled diagram will be as follows
 \begin{center}
 \begin{minipage}{0.25\textwidth}
\centering
\begin{ytableau}
 {3} & {6} & {9} & {11} \\
 {2} & {5} & {8} & {10} \\
 {1} & {4} & {7} \\
 \end{ytableau}
 \end{minipage}
 \hspace{0.0005cm}
 \begin{minipage}{0.7\textwidth}
\raggedright
$\Longrightarrow$ $\tau^{-1}= (3,6,9,11 ) (2,5,8,10 ) (1,4,7 )$
 \end{minipage}
 \end{center}
 Hence, the resulting permutation is $\tau= (7,4,1 ) (10,8,5,2 ) (11,9,6,3 )$.
 \end{example}

Let us now recall a result dealing with the number of permutations with minimal $\mbox{des}$ value in the conjugacy class of large cycles:

Towards the end of the paper of Crochemore-Désarménien-Perrin \cite{CDP}, the authors conclude, utilizing the work of Gessel-Reutenauer, that for the conjugacy class of large cycle of length $n$, denoted by $C(n)$, the number of permutations with minimal $\mbox{des}$ value is equal to the number of primitive binary words of length $n$, known as Lyndon words. Explicitly, we have $\# \{ \pi\in C (n )|~des(\pi)=1 \} = L_{n}=\frac{1}{n}\sum_{d|n}\mu \left(\frac{n}{d} \right)2^{d}$, where $\mu$ is the Möbius function. Theorem \ref{1des} shows that the value $L_{n}$ is positive.

\section {Continuity of the des statistic}

We now prove that for each conjugacy class ${C}_n$, every value from 1 to the maximum value is attained.

\begin{definition}
Let ${S}_n$ be the symmetric group and let $1\le i\le n-1$. The i-th {\em Coxeter generator} of ${S}_n$ isthe permutation $s_i=(i,i+1)$.

\end{definition} 

\begin{prop}\label{d=012}
Let $\pi\in S_{n}, 1\le i \le n$, and set $D:= \{ i,i+1 \} \cap \{ \pi (i ),\pi (i+1 ) \} $. Let $\pi':=s_{i}\pi s_{i}^{-1}$. Then, in the transition from $\pi$ to $\pi'$ the following holds:
\begin{itemize}
 
 \item[Case a:] If $|D|={0}$, then the numbers in location $i,i+1$ are swapped, and the numbers $i,i+1$ themselves are swapped.
 
 \item[Case b:] If $|D|={1}$, then the three elements of the set $ \{\pi(i),i,\pi(i+1),i+1 \}$ are cyclically permuted.
 
 \item[Case c:] If $|D|={2}$, then $\pi=\pi'$.
\end{itemize}
 
\end{prop}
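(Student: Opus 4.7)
The plan is to exploit the identity $s_i^{-1}=s_i$ and compute $\pi'(j)=s_i\bigl(\pi(s_i(j))\bigr)$ pointwise. Because $s_i$ acts as the identity outside $\{i,i+1\}$ and transposes those two elements, one obtains immediately that for every $j\notin\{i,i+1\}$, $\pi'(j)=s_i(\pi(j))$, so $\pi'(j)=\pi(j)$ unless $\pi(j)\in\{i,i+1\}$; likewise $\pi'(i)=s_i(\pi(i+1))$ and $\pi'(i+1)=s_i(\pi(i))$. This single observation localizes every discrepancy between $\pi$ and $\pi'$ to the set of at most four indices $\{i,i+1\}\cup\pi^{-1}(\{i,i+1\})$, and reduces the proposition to a finite check on local data.

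Next I would split into the three cases according to $|D|$. In Case c, $\{\pi(i),\pi(i+1)\}=\{i,i+1\}$, so $\pi^{-1}(\{i,i+1\})=\{i,i+1\}$ and the pointwise formula shows that $\pi$ and $\pi'$ agree outside $\{i,i+1\}$; the two sub-cases $\pi(i)=i$ and $\pi(i)=i+1$ are then dispatched by a direct substitution that gives $\pi'(i)=\pi(i)$ and $\pi'(i+1)=\pi(i+1)$. In Case a, I would set $j:=\pi^{-1}(i)$ and $k:=\pi^{-1}(i+1)$, which are distinct from $i,i+1$ by the hypothesis $|D|=0$. The pointwise formulas then yield $\pi'(i)=\pi(i+1)$, $\pi'(i+1)=\pi(i)$, $\pi'(j)=i+1$ and $\pi'(k)=i$, which is exactly the claimed swap of the values at positions $i,i+1$ together with the swap of positions of the values $i,i+1$.

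The main obstacle, such as it is, lies in Case b: there are four sub-cases, depending on which of $\pi(i)\in\{i,i+1\}$ or $\pi(i+1)\in\{i,i+1\}$ holds and on which of the two elements realizes the coincidence. In each sub-case I would let $\ell$ denote the unique position outside $\{i,i+1\}$ with $\pi(\ell)\in\{i,i+1\}$, and compute the ordered triple $\bigl(\pi'(i),\pi'(i+1),\pi'(\ell)\bigr)$ from $\bigl(\pi(i),\pi(i+1),\pi(\ell)\bigr)$ by the pointwise formula; in every sub-case the new triple turns out to be a cyclic rotation of the old, and its three distinct entries are precisely the three elements of the set $\{\pi(i),i,\pi(i+1),i+1\}$. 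Each sub-case is a one-line substitution, so the only genuine task is to record all four carefully without direction errors. Once this is done, the proposition follows.
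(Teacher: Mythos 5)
Your proposal is correct and follows essentially the same route as the paper: both localize the effect of conjugation by $s_i$ via the observation that right multiplication swaps the values in positions $i,i+1$ and left multiplication swaps the values $i,i+1$ themselves, and then verify each case by direct substitution. Your version is slightly more systematic in that it writes the pointwise formula $\pi'(j)=s_i\bigl(\pi(s_i(j))\bigr)$ explicitly and enumerates all four sub-cases of Case b, where the paper treats one sub-case and declares the rest similar.
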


\begin{example}
For $i={2}$:
\begin{itemize}
 
 \item[(a)] For $\pi= [\underline{2}\mathbf{61}5\underline{3}4 ]$: Indeed, $ | \{ \pi (2 ),\pi (3 ) \} \cap \{ 2,3 \} |=0$, i.e., $|D|={0}$. We have $\pi s_{2}^{-1}=\pi s_{2}= [2\mathbf{16}534 ]$ and then $s_{2}\cdot\pi s_{2}^{-1}= [\underline{3}\mathbf{16}5\underline{2}4 ]$. The numbers 2,3 have swapped places, and the numbers at positions 2 and 3 have also been swapped.

 \item[(b)] For $\pi= [4\mathbf{52}6\mathbf{3}1 ]$: Indeed, $ | \{ \pi (2 ),\pi (3 ) \} \cap \{ 2,3 \} |= | \{ 2 \} |=1$, i.e., $|D|={1}$. We have $\pi s_{2}^{-1}=\pi s_{2}= [4\mathbf{25}631 ]$ and then $s_{2}\cdot\pi s_{2}^{-1}= [4\mathbf{35}6\mathbf{2}1 ]$. The numbers 5,2,3 are cyclically permuted.

 \item[(c)] For $\pi= [4\mathbf{32}615 ]$: Indeed, $ | \{ \pi (2 ),\pi (3 ) \} \cap \{ 2,3 \} |= | \{ 2,3 \} |=2$, i.e., $|D|={2}$. We have $\pi s_{2}^{-1}=\pi s_{2}= [4\mathbf{23}615 ]$ and then $s_{2}\cdot\pi s_{2}^{-1}= [4\mathbf{32}615 ]$, meaning 2 and 3 have returned to their original positions, then $\pi=\pi'$.
\end{itemize} 
\end{example}

\begin{proof}[proof of proposition \ref{d=012}]
Note that for any permutation $\pi$, the permutation obtained after the composition $\pi {s}_{i}$ swaps the numbers in positions $i$ and $i+1$, while all other values of the permutation remain as in $\pi$. The permutation obtained after the composition ${s}_{i} \pi$ swaps the numbers $i,i+1$, while the other elements remain unchanged.

It is easy to see that for any $t\notin \{ i,i+1,\pi (i ), \pi (i+1 ) \} $ we have $\pi (t )=\pi' (t )$. That is, the permutation does not change after conjugating by ${s}_i$, excluding these elements. 

\begin{itemize}
 \item[(a)] 

 Since $ |D |=0$, in the composition $\pi s_{i}^{-1}$, only the values $\pi(i)$ and $\pi(i+1)$ swap places, and thus in the conjugation $s_{i}\cdot\pi s_{i}^{-1}$, only the values $i$ and $i+1$ swap places from $\pi s_{i}^{-1}$ to $s_{i}\cdot\pi s_{i}^{-1}$.

 \item[(b)] Assume $\pi(i)=i$ (the proofs of the other cases are similar). Then the composition $\pi s_{i}^{-1}=\pi s_{i}$ swaps the value in positions $i$ and $i+1$, meaning it swaps $\pi (i )=i$ and $\pi (i+1 )$, and the number $i+1$ remains in its place. Thus, the composition $s_{i}\cdot\pi s_{i}^{-1}$ swaps the values $i,i+1$ of $\pi s_{i}$. 
 Hence, we obtain $\pi' (i )=\pi (i+1 ),\pi' (i+1 )=i+1,\pi'^{-1} (i )=\pi^{-1} (i+1 )$. In other words, if the order of the three values in the permutation was $\pi= [...,i+1,...,i,\pi (i+1 ),... ]$, then $\pi'= [...,i,...,\pi (i+1 ),i+1,... ]$. That is, the numbers are cyclically permuted, as required. 
 
 \item[(c)] Since $|D|={2}$, the values in positions $i,i+1$ are the numbers $i,i+1$. Therefore, the composition $\pi s_{i}^{-1}=\pi s_{i}$ swaps the values in positions $i,i+1$, and then in the composition $s_{i}\cdot\pi s_{i}^{-1}$ they return to their original places, meaning $\pi'=\pi$, as required.
 
\end{itemize}

\end{proof}

\begin{thm}\label{bigthm}
Let ${C}_n$ be a conjugacy class of ${S}_n$, and denote $M:=\max_{\pi\in {C}_n} \des(\pi)$. Then $ \{ \des (\pi )|\pi\in {C}_n \} = \{1,...,M \}$.
 
\end{thm}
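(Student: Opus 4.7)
The plan is to interpolate in $C_n$ from a permutation with $\des=1$ (which exists by Theorem~\ref{1des}) to one with $\des=M$ using conjugations by Coxeter generators, and to invoke Proposition~\ref{d=012} to control the step size of $\des$ along the walk.

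\textbf{Setup.} Fix $\tau\in C_n$ with $\des(\tau)=1$ and $\sigma\in C_n$ with $\des(\sigma)=M$. Since $\tau$ and $\sigma$ are conjugate in $S_n$, I can write $\sigma=w\tau w^{-1}$ and factor $w=s_{j_1}s_{j_2}\cdots s_{j_N}$. This produces a sequence $\tau=\rho_0,\rho_1,\ldots,\rho_N=\sigma$ in $C_n$ with $\rho_{k+1}=s_{j_{k+1}}\rho_k s_{j_{k+1}}^{-1}$.

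\textbf{Bounding $|\Delta\des|$ per step.} I apply Proposition~\ref{d=012} to each transition $\rho_k\to\rho_{k+1}$. Case (c) gives $\Delta\des=0$. In case (b) (a cyclic permutation of three elements in a small window), a direct analysis of the at most five descents at positions $i-1,i,i+1$ together with the two positions adjacent to the position of the third moved value shows $|\Delta\des|\le 1$. In case (a), the conjugation decomposes as the composition of two commuting adjacent transpositions --- the swap of values at positions $(i,i+1)$ and the swap of the consecutive values $i,i+1$ at their positions $p=\rho_k^{-1}(i), q=\rho_k^{-1}(i+1)$ --- each contributing at most $\pm 1$ to $\des$, so $|\Delta\des|\le 2$. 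The extremal value $\pm 2$ arises only when $|p-q|=1$ and both sub-swaps push $\des$ in the same direction.

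\textbf{Intermediate-value step.} If every step has $|\Delta\des|\le 1$, a standard discrete intermediate-value argument applied to the integer-valued sequence $\des(\rho_k)$, which starts at $1$ and ends at $M$, yields every integer in $\{1,\ldots,M\}$. To handle the exceptional $\pm 2$ jumps of case (a), I would re-route the walk: since $|p-q|=1$ and $p,q\notin\{i,i+1\}$ force $\{p,p+1\}\cap\{i,i+1\}=\emptyset$, the two sub-swaps are spatially disjoint, and I plan to replace the single conjugation by $s_{j_{k+1}}$ with a short sequence of conjugations --- for instance, first by $s_p$ and then by $s_{j_{k+1}}$ --- passing through an intermediate permutation in $C_n$ whose descent count is the skipped value.

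\textbf{Main obstacle.} The technically delicate point is the re-routing step: one must show that whenever case (a) forces a $\pm 2$ jump, a suitable alternative conjugation (or short product) always exists inside $C_n$ and realizes the skipped descent value. I expect this to require a careful local case analysis of the configuration of $\rho_k$ around the four positions $i,i+1,p,p+1$; once that is settled, the theorem follows immediately from the bounded step size and the discrete intermediate-value argument.
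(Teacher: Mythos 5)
Your setup reproduces the paper's skeleton exactly: take $\tau$ with $\des(\tau)=1$ from Theorem~\ref{1des}, a maximizer $\sigma$, connect them by conjugations by Coxeter generators, and bound each step by $|\Delta\des|\le 2$ with equality possible only in case (a) of Proposition~\ref{d=012} when the positions $p=\rho_k^{-1}(i)$ and $q=\rho_k^{-1}(i+1)$ are adjacent. Up to that point your argument is sound (and your reduction of the problem to exhibiting, for each $2$-jump, \emph{some} element of $C_n$ attaining the skipped value is the right one). But the proof stops exactly where the real work begins: the ``re-routing'' step is stated as a plan (``I would re-route\dots'', ``I expect this to require a careful local case analysis'') rather than carried out, and the specific candidate you offer --- conjugating first by $s_p$ and then by $s_i$, hoping the intermediate permutation $s_p\rho_k s_p$ has the midpoint descent number --- is not verified and is not obviously true. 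Note also why a purely local splitting is not available: the natural ``halfway'' objects between $\pi$ and $s_i\pi s_i$, namely $s_i\pi$ and $\pi s_i$, have a different cycle type and so do not lie in $C_n$; one genuinely needs a new element of the conjugacy class, not a factorization of the step.

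The paper's resolution of this point is global rather than local, and this is the part your proposal is missing. It introduces the cyclic descent number $\mbox{cdes}$, proves it is invariant under conjugation by the long cycle $w=(1,\dots,n)$ (Proposition~\ref{cdes}), and shows (Proposition~\ref{cdes12}) that for a suitable power $k$ determined by the positions of $i$ and $i+1$, the rotated permutation $w^{-k}\sigma w^{k}$ has $\mbox{cdes}=\des$ or $\mbox{cdes}=\des+1$ according to the orientation of the pair. Combining these, either $w^{-k}\sigma_0 w^{k}$ or $w^{-k}\sigma_2 w^{k}$ lands precisely on the skipped value $\des(\sigma_0)+1$, with the exceptional case $\sigma_0=w^{t}$ handled separately by Proposition~\ref{wt}. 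Since your local case analysis around the positions $i,i+1,p,p+1$ is neither completed nor shown to terminate in a valid intermediate permutation of $C_n$, the proposal as written has a genuine gap at the theorem's central step.
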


The proof proceeds as follows:
Let $s_{i}= (i,i+1 )$ be the ${i}$-th Coxeter generator,
we get $s_{i}^{-1}=s_{i}$.

There is a permutation $\pi_{0}\in{C}_n$ such that $\text {des}(\pi_{0})=1$ by \ref{1des}, and let $\sigma\in{C}_n$ such that $\text{des}(\sigma)=M$. By the definition of conjugacy classes, these permutations are conjugates, meaning, there exists a permutation $\alpha\in S_{n}$ such that $\sigma=\alpha\pi_{0}\alpha^{-1}$.

In addition, the Coxeter generator $s_{1},...,s_{n-1}$ generate the entire group $S_n$, so there exist generators $\alpha_{1},...,\alpha_{l}\in \{ s_{1},...,s_{n-1} \} $ (repetitions are allowed), such that $\alpha=\alpha_{l}\cdot...\cdot\alpha_{1}$. Therefore, there exists a path $\pi_{0},\pi_{1},...,\pi_{l}=\sigma$, such that for all $0\le j<l,\,\pi_{j+1}=\alpha_{j+1}\pi_{j}\alpha_{j+1}^{-1}$. 

According to Claim \ref{le2}, which will be shown below, it holds that $|\mbox{des}(\pi_{j+1})-\mbox{des}(\pi_{j})|\le2$ for all $j$. This means that for any two consecutive permutations $\pi_j,\pi_{j+1}$ in the path, the value of $des$ increases or decreases by at most 2.
To finish the proof, it remains to close the gapes of size 2. That is, given $0\le j<l$, let $\sigma_{0},\sigma_{2}\in {C}_n$ such that $\sigma_{2}=\alpha_{j}\sigma_{0}\alpha_{j}^{-1}$ and $\mbox{des} (\sigma_{2} )-\mbox{des} (\sigma_{0} )=2$. We will prove that there exists a permutation $\sigma_{1}\in {C}_n $ such that $\mbox{des} (\sigma_{1} )-\mbox{des} (\sigma_{0} )=1$ and this will close the gaps, thereby proving the theorem.

\subsection{Difference in the path}

\begin{prop}\label{not consecutive} Let $\pi\in {S}_n$ and let $1\le i\le n$. If $\pi^{-1}(i),\pi^{-1}(i+1)$ are not consecutive numbers, then $\mbox{des}(\pi)=\mbox{des}(\pi s_i)$. 
 
\end{prop}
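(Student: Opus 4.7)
The plan is to localize, position by position, how the descent indicators transform when we pass from $\pi$ to $\pi s_{i}$. Reading off the description from the proof of Proposition \ref{d=012}, this operation interchanges the two entries $i$ and $i+1$ in the one-line notation of $\pi$ while leaving every other entry fixed. Setting $p:=\pi^{-1}(i)$ and $q:=\pi^{-1}(i+1)$, the permutation $\pi s_{i}$ therefore agrees with $\pi$ everywhere except that $i+1$ now sits at position $p$ and $i$ at position $q$.

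Since only the entries at positions $p$ and $q$ change, the only descent indicators $[\pi(j)>\pi(j+1)]$ that can possibly flip are those with $j\in\{p-1,p,q-1,q\}\cap\{1,\dots,n-1\}$. This is precisely where the hypothesis $|p-q|\ne 1$ enters: it forces each of $p-1,p+1,q-1,q+1$ to be different from both $p$ and $q$, so every neighboring entry $\pi(p\pm1),\pi(q\pm1)$ has a value lying outside $\{i,i+1\}$. For example, at position $p-1$ the test switches from $\pi(p-1)>i$ to $\pi(p-1)>i+1$; since $\pi(p-1)\notin\{i,i+1\}$, both inequalities are equivalent to $\pi(p-1)\ge i+2$, and the indicator is preserved. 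Entirely symmetric computations at positions $p$, $q-1$, and $q$ give the same conclusion, and summing across all $j$ yields $\des(\pi)=\des(\pi s_{i})$.

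I do not expect a genuine obstacle; what needs attention is purely boundary bookkeeping. If $p$ or $q$ equals $1$ or $n$, some of the four candidate comparisons simply do not exist and the list of checks trims accordingly. If $|p-q|=2$, then $p+1=q-1$ is a single common neighbor, but its value still lies outside $\{i,i+1\}$, so the same argument applies. In every configuration the proof reduces to the single observation that each relevant neighbor's value is distinct from both $i$ and $i+1$, which is exactly the content of the non-adjacency hypothesis.
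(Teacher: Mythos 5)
Your proof is correct and follows essentially the same route as the paper's: only the values $i$ and $i+1$ are exchanged, their positions are non-adjacent, so every affected comparison involves an entry $a\notin\{i,i+1\}$, for which $a<i\iff a<i+1$; you merely make explicit the four candidate positions and the boundary cases that the paper's one-line argument leaves implicit. Note that, like the paper's own proof (and its later application to $\pi''=s_i\pi$), you read the operation as swapping the \emph{values} $i,i+1$, which is the intended meaning here even though Proposition~\ref{d=012} literally assigns that role to left-multiplication by $s_i$.
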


\begin{proof}
$\pi^{-1} (i ),\pi^{-1} (i+1 )$ are not consecutive numbers and then $i,i+1$ are not in consecutive positions in the sequence of values of $\pi$, thus they are not in consecutive positions in the sequence of values of $\pi s_i$. 

Note that for any $a\ne i,i+1$, we have $a<i\iff a<i+1$.
Therefore, $\mbox{Des}(\pi)=\mbox{Des}(\pi s_i)$ because $\pi s_i,\pi$ are identical except for the swap between the values $i,i+1$, and thus we get $\mbox{des}(\pi)=\mbox{des}(\pi s_i)$
\end{proof}

\begin{prop}\label{pis_i} Let $\pi\in {S}_n$ and $1\le i\le n$. then $|\mbox{des}(\pi)-\mbox{des}(s_i\pi )|\le1$. 
 
\end{prop}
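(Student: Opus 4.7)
The plan is to exploit the fact that left multiplication by $s_i$ swaps the values $i$ and $i+1$ in the one-line notation of $\pi$, leaving every other entry fixed. Writing $a := \pi^{-1}(i)$ and $b := \pi^{-1}(i+1)$, I would split into two cases according to whether the positions $a$ and $b$ are adjacent (i.e.\ $|a-b|=1$) or not, and then count how many descent indices in $\{1, \dots, n-1\}$ are affected by the swap.

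If $|a-b| \ge 2$, the argument is essentially the one used for Proposition \ref{not consecutive}: for every index $k$, at most one of $\pi(k), \pi(k+1)$ lies in $\{i, i+1\}$, and for every value $v \notin \{i, i+1\}$ the equivalences $v < i \iff v < i+1$ and $v > i \iff v > i+1$ hold. Hence the descent status at each position $k$ is preserved, so $\des(\pi) = \des(s_i\pi)$, and the difference is $0$.

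If $|a-b| = 1$, assume without loss of generality $b = a+1$ (the other subcase is symmetric). Then the window at positions $(a, a+1)$ reads $(i, i+1)$ in $\pi$, an ascent, and $(i+1, i)$ in $s_i\pi$, a descent, so the descent status at position $a$ flips. At the adjacent indices $a-1$ and $a+1$ (whenever these lie in $\{1, \dots, n-1\}$), the outer neighbor $v$ again satisfies $v \notin \{i, i+1\}$, so the same equivalence of inequalities shows that the descent status there is unchanged. Therefore exactly one descent index flips, and $|\des(\pi) - \des(s_i\pi)| = 1$.

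Combining the two cases yields the desired inequality $|\des(\pi) - \des(s_i\pi)| \le 1$. I do not anticipate a genuine obstacle; the only point demanding care is the bookkeeping of the finitely many configurations around the positions $a-1, a, a+1$ (and symmetrically $b-1, b, b+1$), which is a routine case check entirely analogous to the one used in Proposition \ref{not consecutive}. It is also worth noting in passing that the proof shows more than the stated bound: the difference is either $0$ or exactly $1$, depending on whether the positions of $i$ and $i+1$ are nonadjacent or adjacent.
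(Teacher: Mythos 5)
Your argument is internally correct, but it proves a different statement from the one the paper's proof establishes and later relies on, because of an ambiguity in the paper's conventions that you resolved the other way. You read $s_i\pi$ as the permutation obtained from $\pi$ by swapping the \emph{values} $i$ and $i+1$ in one-line notation (consistent with the convention announced in the proof of Proposition \ref{d=012}), and for that operation your two-case analysis is sound: if the positions of $i$ and $i+1$ are non-adjacent the descent set is unchanged, and if they are adjacent then exactly the descent status of the position between them flips, since every neighbouring entry $v$ satisfies $v<i\iff v<i+1$. The paper's own proof, however, treats the operation as swapping the \emph{entries in positions} $i$ and $i+1$: it restricts attention to the window $\pi(i-1),\pi(i),\pi(i+1),\pi(i+2)$, reduces to order-isomorphic patterns in $S_4$, and checks a table of six pairs. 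That position-swap version is the one actually invoked afterwards: case (A) of Theorem \ref{les equal 1} cites this proposition in the form $|\des(\pi)-\des(\pi s_i)|\le 1$, and Corollary \ref{le2} cites ``the table'' for the passage from $s_i\pi$ to $s_i\pi s_i^{-1}$.

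The two statements are genuinely different, and your argument does not cover the one that is needed. For the position swap, the two exchanged entries $\pi(i)$ and $\pi(i+1)$ are arbitrary values rather than the consecutive integers $i$ and $i+1$, so your key step --- that the outer neighbours lie outside $\{i,i+1\}$ and hence the descent statuses at the outer positions cannot change --- fails. For example, swapping the middle two entries of $2143$ gives $2413$: the statuses at all three positions change (positions $1$ and $3$ go from descent to ascent, position $2$ from ascent to descent), and only after cancellation is the net change equal to $1$. Proving the bound in that setting genuinely requires the exhaustive pattern check (or an equivalent cancellation argument), which is what the paper's table supplies and what your proposal omits; your value-swap statement is essentially Proposition \ref{not consecutive} together with the adjacent case, and while true, it is not the fact the rest of the paper draws from this proposition.
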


\textbf{Definition:}
 Let $a= \langle a_{1},...,a_{m} \rangle ,b= \langle b_{1},...,b_{m} \rangle$ be two sequences of distinct numbers, and let $\varphi:a\to b$ such that $\varphi(a_{i})=b_{i}$. We say that these sequences are of the \textbf{same type} if $\varphi(a_{i})<\varphi(a_{j})\iff a_{i}<a_{j}$ for all $i\ne j$.

\begin{proof}
In the sequence of values of $\pi$, the relevant part is $[...,\pi(i-1),\pi(i),\pi(i+1),\pi(i+2),...]$. If $i=1$ or $i=n-1$, there are only three relevant values here.

It is easy to see that $\mbox{Des} (a )=\mbox{Des} (b )$ for two sequences $a,b$ of the same type, and then $\des(a)=\des(b)$.

Additionally, for every sequence $a$ of length $m$, there exists a unique sequence of the same type $b=\langle b_{1},...,b_{m} \rangle$ of length $m$, such that $\{ b_{1},...,b_{m}\} =\{ 1,\ldots,m\} $.

Hence, we can discuss all the permutations $\sigma\in S_{4}$ instead of discussing the numbers $\pi(i-1),\pi (i ),$ $\pi (i+1 ),\pi(i+2)$ in the permutation $\pi$. 
We will examine each possibility for the swap between $\pi (i+1 )$ and $\pi (i )$ in the transition from $\pi$ to $\pi s_i$, and how it affect their $\des$ value: 

Notice that $S_{4}$ decomposes into 12 pairs of permutations $\{ (a,b,c,d),(a,c,b,d)\} $ and we are looking at the difference between their $\des$ values for each pair. Additionally, for every permutation $[a,b,c,d]$ we get $\des([a,b,c,d])+\des([d,c,b,a])=3$. Therefore, we can assume \(a < d\), and it suffices to go through 6 pairs: 

\begin{table}[h!]\label{des table}
\centering
\begin{tabular}{|c|c|c|c|c|c|c|}
\hline
$\pi$ & 1234 & 1243 & 1342 & 2143 & 2134 & 3214 \\
\hdashline
$\mbox{des}(\pi)$ & 0 & 1 & 1 & 2 & 1 & 2\\
\hline
$s_i\pi$ & 1324 & 1423 & 1432 & 2413 & 2314 & 3124 \\
\hdashline
$\mbox{des}(s_i\pi)$ & 1 & 1 & 2 & 1 & 1 & 1\\
\hline
\end{tabular}
\caption{6 pairs and their des value }
\label{table:simple}
\end{table}

From the table, it can be seen that $|\mbox{des}(\pi)-\mbox{des}( s_i\pi)|\le1$
\end{proof}

\begin{thm}\label{les equal 1} Let $\pi\in {C}_n$ and let $1\le i\le n$. If case (b) or (c) of Proposition \ref{d=012} holds, or if case (a) holds such that $\pi^{-1} (i ),\pi^{-1} (i+1 )$ are not consecutive numbers, then \[\mid {\mbox{des}(\pi)-\mbox{des}(s_{i}^{-1}\pi s_{i})}\mid \le1 
 \]

\end{thm}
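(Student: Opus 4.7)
The plan is to write $\pi'=s_i^{-1}\pi s_i=s_i\pi s_i$ (using $s_i^{-1}=s_i$) and factor it in one of two ways: as $s_i\cdot(\pi s_i)$ or as $(s_i\pi)\cdot s_i$. Each factor is an elementary swap — either of the entries at positions $i,i+1$ (for right multiplication by $s_i$) or of the values $i,i+1$ (for left multiplication by $s_i$) — and by Proposition \ref{pis_i} each step changes $\des$ by at most $1$. Thus the naive bound $|\des(\pi)-\des(\pi')|\le 2$ holds in general, and the theorem amounts to showing that, in each of the stipulated cases, at least one of the two steps contributes zero.

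For case (c) of Proposition \ref{d=012}, $\pi'=\pi$ and there is nothing to prove. For case (a) with the hypothesis that $\pi^{-1}(i),\pi^{-1}(i+1)$ are non-consecutive, I use the factorization $(s_i\pi)\cdot s_i$: Proposition \ref{not consecutive} tells us that the value-swap step leaves $\des$ unchanged, so $\des(s_i\pi)=\des(\pi)$, and the subsequent position-swap step contributes at most $1$.

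The main work is case (b). The key observation is the following: in case (b), at least one of the pairs $\{\pi^{-1}(i),\pi^{-1}(i+1)\}$ and $\{(\pi s_i)^{-1}(i),(\pi s_i)^{-1}(i+1)\}$ consists of non-consecutive integers. Granting this, I choose the factorization whose value-swap step operates on the configuration with non-consecutive positions: by Proposition \ref{not consecutive} that step contributes $0$, and the complementary position-swap step contributes at most $1$, yielding the desired bound.

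To verify the key observation, I split case (b) into the four subcases $\pi(i)=i$, $\pi(i)=i+1$, $\pi(i+1)=i$, $\pi(i+1)=i+1$, and in each one let $j$ denote the position of the displaced member of $\{i,i+1\}$ (so $j\notin\{i,i+1\}$). A direct check then shows that $\{\pi^{-1}(i),\pi^{-1}(i+1)\}$ is consecutive precisely when $j$ equals one specific value (for instance $j=i-1$ in the subcase $\pi(i)=i$), while the corresponding pair in $\pi s_i$ — obtained by tracking how the right multiplication by $s_i$ relocates the values $i$ and $i+1$ — is consecutive precisely when $j$ equals a different specific value (namely $j=i+2$ in that subcase); since these two values differ, the two pairs cannot be simultaneously consecutive. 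The hard part of the proof is this four-fold bookkeeping, though each subcase reduces to a single arithmetic comparison of positions.
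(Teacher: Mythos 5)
Your proof is correct, and for the main case (b) it takes a genuinely different route from the paper. Both proofs handle case (c) trivially and case (a) (non-consecutive positions) identically, by splitting $s_i\pi s_i$ into a value-swap step that preserves $\des$ (Proposition \ref{not consecutive}) and a position-swap step that changes it by at most $1$ (Proposition \ref{pis_i}). For case (b), however, the paper reduces (w.l.o.g.\ $\pi(i)=i$) to the subcase where $\pi^{-1}(i),\pi^{-1}(i+1)$ are consecutive, which forces $\pi(i-1)=i+1$, and then compares $\mbox{Des}(\pi)$ with $\mbox{Des}(\pi')$ directly, position by position, over the window $\{i-2,i-1,i,i+1\}$ with a further split on whether $i<\pi(i+1)$. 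You instead prove a structural claim: the pair of positions of the values $i,i+1$ in $\pi$ and the corresponding pair in $\pi s_i$ cannot both be consecutive, because in each of the four subcases of (b) the ``bad'' location $j$ of the displaced value would have to equal $i-1$ for one pair and $i+2$ for the other. I checked all four subcases and the claim holds, so choosing whichever factorization $s_i\cdot(\pi s_i)$ or $(s_i\pi)\cdot s_i$ puts the value swap on the non-consecutive configuration gives the bound. Your argument is more uniform -- one positional comparison dispatches all four subcases of (b), with no descent-set bookkeeping, and it also subsumes case (a) -- whereas the paper's computation is longer but yields explicit information about which descents appear or disappear. One cosmetic caution: Propositions \ref{not consecutive} and \ref{pis_i} are stated in the paper with $\pi s_i$ and $s_i\pi$ in roles opposite to how their proofs (and the paper's own proof of this theorem) actually use them; you apply them in the intended sense (value swap vs.\ position swap), which is fine, but you should say explicitly that this is how you read them.
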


\begin {proof}
Let $\pi''=s_{i}\pi$ and $\pi'=\pi'' s_{i}$, which means $\pi'=s_{i}^{-1}\pi s_{i}$.
$\pi''$ is the permutation obtained from $\pi$ by swapping the values $i,i+1$.

\begin{itemize}
\item[(A)] If $\pi$ satisfies case (a) such that $\pi^{-1} (i ),\pi^{-1} (i+1 )$ are not consecutive numbers:
From $\pi''=s_{i}\pi$ we get $\mbox{des} (\pi )=\mbox{des} (\pi'' )$ by \ref{not consecutive}. In \ref{pis_i} we prove that $ |\mbox{des} (\pi )-\mbox{des} (\pi s_{i} ) |\le1$

Since \(|D| = 0\), we get $ |\mbox{des} (\pi )-\mbox{des} (s_{i}^{-1}\pi s_{i} ) |\le1$, for all $1<i<n-1$. 

If $i=1$ or $i=n-1$, it can be seen similarly that the inequality holds.

\item[(B)] If $\pi$ satisfies case (b):
Suppose that $\pi (i )=i$ (the other cases hold similarly).

$\pi= [...i...i+1... ]$ then $\pi''= [...i+1...i... ]$.

There are two cases:

1. $\pi^{-1} (i ),\pi^{-1} (i+1 )$ are not in consecutive positions, then from \ref{not consecutive} we get $\mbox{des} (\pi )=\mbox{des} (\pi'' )$, and from \ref{pis_i} we get $ |\mbox{des} (\pi )-\mbox{des} (s_{i}^{-1}\pi s_{i} ) |\le1$.

2. $\pi^{-1} (i ),\pi^{-1} (i+1 )$ are in consecutive position, then the permutation $\pi$ looks like:
\[ \pi= \left (\begin{array}{ccccc}
... & i-1 & i & i+1 & ...\\
... & i+1 & i & \pi (i+1 ) & ...
\end{array} \right) \] (recall that $\pi (i )=i$).

Then $\pi''=s_{i}\pi= \left(\begin{array}{ccccc}
... & i-1 & i & i+1 & ...\\
... & \mathbf{i} & \mathbf{i+1} & \pi (i+1 ) & ...
\end{array} \right)$ and then \[\pi'=\pi''s_{i}=\left (\begin{array}{ccccc}
... & i-1 & i & i+1 & ...\\
... & i & \pi (i+1 ) & i+1 & ...
\end{array}\right ) \] 
We note that $j\in\mbox{Des} (\pi' )\iff j\in\mbox{Des} (\pi )$ for every $j\notin \{ i-2,i-1,i,i+1 \} $.

For $j=i-2$, note that $\pi (i-2 )< i+1 \iff\pi (i-2 )<i$. Thus, we get $i-2\in\mbox{Des} (\pi )\iff i-2\in\mbox{Des} (\pi' )$.

It remains to check what happens for the positions $ \{ i-1,i,i+1 \} $. We distinguish between two cases:

(1) If $i<\pi (i+1 )$, then $i-1\notin\mbox{Des} (\pi' ),i-1\in\mbox{Des} (\pi )$ and $i\notin\mbox{Des} (\pi )$. Additionally, since $i<\pi (i+1 )$, we also have $i+1<\pi (i+1 )$, hence $i\in\mbox{Des} (\pi' )$. Therefore, except perhaps for $i+1$, there are the same number of elements in the sets $\mbox{Des} (\pi' )$ and $\mbox{Des} (\pi )$, thus $ |\mbox{des} (\pi )-\mbox{des} (s_{i}^{-1}\pi s_{i} ) |\le1$.

(2) If $\pi (i+1 )<i$, then $i-1\in\mbox{Des} (\pi' ),i-1,i\in\mbox{Des} (\pi )$. Since $\pi (i+1 )<i$, we have $\pi (i+1 )<i+1$, hence $i\notin\mbox{Des} (\pi' )$. 

Consider $i+1$:

If $i+1\in\mbox{Des} (\pi )$, it means $\pi (i+2 )<\pi (i+1 )$. Since $\pi (i+1 )<i+1$, we also have $\pi (i+2 )<i+1$, i.e., $i+1\in\mbox{Des} (\pi' )$.

If $i+1\notin\mbox{Des} (\pi )$ then $i-1,i\in\mbox{Des} (\pi ),i+1\notin\mbox{Des} (\pi )$, $i-1\in\mbox{Des} (\pi' )$ 
 and $i\notin\mbox{Des} (\pi' )$.

Hence, whether $i+1\in\mbox{Des} (\pi' )$ or not, the values $\mbox{des} (\pi )$ and $\mbox{des} (\pi' )$ differ by at most 1.

\item[(C)] If Case (c) holds, $\pi=\pi'$ so $\mbox{des} (\pi )=\mbox{des} (s_{i}^{-1}\pi s_{i} )$.

\end{itemize} 

Therefore, for all cases: $ |\mbox{des} (\pi )-\mbox{des} (s_{i}^{-1}\pi s_{i} ) |\le1$.

\end {proof}

In this theorem, we generalize what happens for the difference $ |\mbox{des} (\pi )-\mbox{des} (s_{i}^{-1}\pi s_{i} ) |$ for all cases .

\begin{cor}\label{le2}
Let ${C}_n$ be a conjugacy class of ${S}_n$, and let $\pi\in {C}_n$,$1\le i\le n$. 

Then $ |\mbox{des} (\pi )-\mbox{des} (s_{i}^{-1}\pi s_{i} ) |\le2$ .
 
\end{cor}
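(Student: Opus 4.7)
My plan is to treat the corollary as a small refinement of Theorem~\ref{les equal 1}. By Proposition~\ref{d=012}, the conjugation $\pi \mapsto s_i^{-1}\pi s_i$ falls into one of the three cases (a), (b), (c). Theorem~\ref{les equal 1} already gives the sharper bound $|\mbox{des}(\pi) - \mbox{des}(s_i^{-1}\pi s_i)| \le 1$ in case (b), in case (c), and in case (a) when $\pi^{-1}(i), \pi^{-1}(i+1)$ are not in consecutive positions. So the only situation not yet handled is case (a) with $\pi^{-1}(i)$ and $\pi^{-1}(i+1)$ at consecutive positions; this is where work is needed and where the bound $2$ (rather than $1$) is genuinely necessary.

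For that remaining case I would decompose the conjugation as $s_i^{-1}\pi s_i = s_i\pi s_i = (s_i\pi)\cdot s_i$ and apply the triangle inequality after bounding each half-step separately. The first step $\pi \to s_i\pi$ swaps the values $i$ and $i+1$ in one-line notation, and Proposition~\ref{pis_i} yields $|\mbox{des}(\pi) - \mbox{des}(s_i\pi)| \le 1$. The second step $s_i\pi \to (s_i\pi) s_i$ swaps the entries in positions $i$ and $i+1$; since descent status depends only on the order type of the four-element window $\pi(i-1), \pi(i), \pi(i+1), \pi(i+2)$, the same enumeration of the twelve pairs carried out in the proof of Proposition~\ref{pis_i} (or an application of that proposition to a suitable inverse) shows the change is at most $1$ here as well. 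Combining the two estimates gives $|\mbox{des}(\pi) - \mbox{des}(s_i^{-1}\pi s_i)| \le 2$.

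The main obstacle is conceptual rather than computational: one must notice that although Theorem~\ref{les equal 1} fails to cover case (a) with consecutive positions, the gap can be closed by splitting the conjugation into two elementary moves and absorbing one unit of descent change per move. As a sanity check that the bound $2$ is sharp, I would verify on a small example (for instance $\pi = [5,4,1,2,3]$ with $i=2$, where $\mbox{des}(\pi)=2$ and $\mbox{des}(s_2\pi s_2)=4$) that a jump of exactly $2$ really occurs, confirming that no further strengthening is possible in this regime and, incidentally, motivating the gap-closing argument sketched in the paragraph leading up to Claim~\ref{le2}.
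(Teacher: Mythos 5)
Your argument is essentially the paper's own proof: both reduce to case (a) with $\pi^{-1}(i),\pi^{-1}(i+1)$ consecutive, split the conjugation into the two elementary steps $\pi\to s_i\pi$ and $s_i\pi\to(s_i\pi)s_i$, bound each step's effect on $\mbox{des}$ by $1$ (the second via the same four-entry-window table from Proposition~\ref{pis_i}), and add. One small correction to your sanity check: for $\pi=[5,4,1,2,3]$ and $i=2$ one has $s_2\pi s_2=[5,1,4,3,2]$ with $\mbox{des}=3$, so the jump is only $1$ (indeed Corollary~\ref{cor i} forces $\pi(i)<\pi(i+1)$ for a jump of $+2$, which fails here); a genuine jump of $2$ occurs e.g.\ for $\pi=[1,4,5,2,3]$ with $i=2$, where $\mbox{des}(\pi)=1$ and $\mbox{des}(s_2\pi s_2)=3$.
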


\begin{proof}
In Claim \ref{les equal 1} we proved that $ |\mbox{des}(\pi)-\mbox{des} (s_{i}^{-1}\pi s_{i} ) |\le1$ for some cases.

It remains to show about case (a) such that $\pi^{-1} (i ),\pi^{-1} (i+1 )$ are consecutive:

Set $m_i= \min \{ \pi^{-1} (i ),\pi^{-1} (i+1 ) \}$. Then $m_i$ is either added or removed from $\mbox{Des}(\pi)$ to $\mbox{Des}(s_i \pi)$.

Furthermore, there is at most one change from $\mbox{Des} (s_{i}\pi )$ to $\mbox{Des} (s_{i}\pi s_{i}^{-1} )$, as shown in the table in the proof of Claim \ref{pis_i}.

Therefore, in total, there are at most 2 changes in $\mbox{des}(\pi)$ compared to $\mbox{des}(s_i^{-1} \pi s_i)$, that is, $ |\mbox{des} (\pi_{j} )-\mbox{des} (\pi_{j+1} ) |\le2$ 
\end{proof}

This Corollary implies that $ |\mbox{des} (\pi_{j} )-\mbox{des} (\pi_{j+1} ) |\le2$ at each step in the sequence $\pi_{0},...,\pi_{l}$.

\begin{cor}\label{cor i}
 If $\mbox{des} (s_{i}^{-1}\pi s_{i} )-\mbox{des} (\pi )=2$ then $\pi^{-1}(i)<\pi^{-1}(i+1)$ and $\pi(i)<\pi(i+1)$.
\end{cor}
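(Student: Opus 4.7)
The plan is to factor the conjugation $s_i^{-1}\pi s_i=s_i\pi s_i$ into two elementary moves and track each one's descent contribution. Let $\pi'':=s_i\pi$, obtained from $\pi$ by swapping the values $i$ and $i+1$, and $\pi':=\pi''s_i$, obtained from $\pi''$ by swapping the entries at positions $i$ and $i+1$. The reasoning already recorded inside the proof of Corollary \ref{le2} pins down that $|\mbox{des}(\pi)-\mbox{des}(\pi')|=2$ can occur only in case (a) of Proposition \ref{d=012} with $\pi^{-1}(i),\pi^{-1}(i+1)$ in consecutive positions, while each of the two moves then shifts $\mbox{des}$ by exactly $\pm 1$. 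Under the hypothesis $\mbox{des}(\pi')-\mbox{des}(\pi)=+2$ both shifts must equal $+1$, and I will read off the two claimed inequalities from those sign requirements.

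I would handle Move 1 first. Set $m_i:=\min\{\pi^{-1}(i),\pi^{-1}(i+1)\}$; since the two positions are consecutive, only the descent status at $m_i$ can change under Move 1. If $\pi^{-1}(i)<\pi^{-1}(i+1)$, then the pair at positions $m_i, m_i+1$ reads $(i,i+1)$ in $\pi$ (an ascent) and $(i+1,i)$ in $\pi''$ (a descent), so Move 1 contributes $+1$; in the reverse order the contribution is $-1$. Therefore the required $+1$ from Move 1 is equivalent to the first conclusion $\pi^{-1}(i)<\pi^{-1}(i+1)$.

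For Move 2, case (a) gives $\pi(i),\pi(i+1)\notin\{i,i+1\}$, so Move 1 leaves positions $i,i+1$ untouched: $\pi''(i)=\pi(i)$ and $\pi''(i+1)=\pi(i+1)$. The descent change under Move 2 is then a local four-entry calculation on the window $(\pi''(i-1),\pi''(i),\pi''(i+1),\pi''(i+2))$, already classified by the table in the proof of Proposition \ref{pis_i}. Among its entries, the position swap raises $\mbox{des}$ by exactly one only in the patterns $1234\to 1324$ and $1342\to 1432$, both of which have the central pair in ascending order; this yields $\pi''(i)<\pi''(i+1)$, which by the previous identifications is exactly $\pi(i)<\pi(i+1)$. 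The main subtlety I expect to have to argue carefully is the boundary situation $\pi^{-1}(i)\in\{i-2,i+2\}$, where one of the boundary entries of the window $\pi''(i-1)$ or $\pi''(i+2)$ is forced to equal $i$ or $i+1$; one must check that in those cases the resulting window still falls under the same $+1$ rows of the table rather than producing a new $+1$ configuration outside the ascending central-pair regime.
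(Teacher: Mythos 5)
The paper states this corollary without any proof, so your two-step decomposition (the value swap $\pi\mapsto s_i\pi$ followed by the position swap $s_i\pi\mapsto (s_i\pi)s_i$) is exactly the argument the author leaves implicit, and your treatment of Move 1 is correct: each move changes $\mbox{des}$ by at most one, Move 1 changes it by exactly $\pm 1$, so a total of $+2$ forces Move 1 to contribute $+1$, which happens precisely when $\pi^{-1}(i)<\pi^{-1}(i+1)$. The gap is in Move 2. The table in the proof of Proposition \ref{pis_i} lists only the six representative windows with $a<d$; the other six are their reversals, and reversal negates the change in $\mbox{des}$. In particular the reversal of the row $2143\to 2413$ (change $-1$) is $3412\to 3142$ (change $+1$), and this window has its central pair in \emph{descending} order. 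So the inference ``the position swap contributes $+1$ only when $\pi''(i)<\pi''(i+1)$'' is false, and your derivation of the second inequality collapses at exactly that step.

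This is not merely a repairable gap: the second conclusion is itself false for $1<i<n-1$. Take $n=6$, $i=3$, $\pi=[2,5,6,1,3,4]$. This is in case (a) with $\pi^{-1}(3)=5$ and $\pi^{-1}(4)=6$ consecutive; one computes $s_3\pi s_3=[2,5,1,6,4,3]$, so $\mbox{des}(s_3^{-1}\pi s_3)-\mbox{des}(\pi)=3-1=2$, yet $\pi(3)=6>1=\pi(4)$. (The window of $s_3\pi=[2,5,6,1,4,3]$ at positions $2,\dots,5$ is $(5,6,1,4)$, which is precisely the pattern $3412$.) The inequality $\pi(i)<\pi(i+1)$ does survive when $i=1$ or $i=n-1$, because there the window has only three entries and a descending central pair cannot produce a $+1$ under the position swap; that is also the only situation in which the paper later invokes the second conclusion (case (2) of Proposition \ref{aa}). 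So the boundary cases you flagged as the ``main subtlety'' are in fact the only ones where the claim holds; the failure occurs in the interior, which your reading of the half-table cannot detect.
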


\subsection{Preparation for the main result}

We now aim to close the gaps in the cases where the difference is greater than 1 (i.e., in case (a), where $\pi^{-1} (i ),\pi^{-1} (i+1 )$ are consecutive).

Let $w= (1,...,n )$, then
$w^{-1}= (n,...,1 )$.

\begin{definition}
Let $\pi \in {S}_n$, and denote $\pi (n+1 ):=\pi (1 )$. The {\em cyclic descent set} of $\pi$ is $\mbox{cDes}(\pi) :=\{i \in \{1,\dots,n\} \mid \pi(i)>\pi(i+1)\}$. 

\end{definition}

\begin{definition}
Let $\pi \in {S}_n$. The {\em cyclic descent number} of $\pi$ is $\mbox{cdes}(\pi):= |\mbox{cDes} (\pi ) |$. 
 
\end{definition}

\begin{prop} \label{cdes}

Let $\pi\in {S}_n$, and $w=(1,...,n)$, then $\mbox{cdes} (w^{-k}\pi w^{k} )=\mbox{cdes} (\pi )$ for all $k\in\mathbb{Z}$.
 
\end{prop}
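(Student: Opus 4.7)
The plan is to compute $w^{-1}\pi w$ explicitly and then decompose the conjugation into two operations, each of which preserves $\mbox{cdes}$. Writing $w=(1,\ldots,n)$ in cycle form gives $w(i)\equiv i+1\pmod n$ and $w^{-1}(j)\equiv j-1\pmod n$ (interpreted in $\{1,\dots,n\}$), so
\[
(w^{-1}\pi w)(i)=w^{-1}\bigl(\pi(i+1)\bigr),
\]
with indices taken cyclically. Setting $\rho:=\pi w$, so that $\rho(i)=\pi(i+1)$ cyclically, the conjugate $\sigma:=w^{-1}\pi w$ equals $w^{-1}\circ\rho$, i.e.\ $\sigma$ is obtained from $\rho$ by the value relabeling $v\mapsto v-1\pmod n$. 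It thus suffices to establish (i) $\mbox{cdes}(\rho)=\mbox{cdes}(\pi)$ and (ii) $\mbox{cdes}(w^{-1}\!\circ\rho)=\mbox{cdes}(\rho)$, and then to iterate over $k\in\mathbb{Z}$.

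Step (i) is immediate from the cyclic convention $\pi(n+1):=\pi(1)$: the equivalence $i\in\mbox{cDes}(\rho)\iff i+1\in\mbox{cDes}(\pi)$ shows that $\mbox{cDes}(\rho)$ is the cyclic shift of $\mbox{cDes}(\pi)$ by $-1$, hence of the same cardinality.

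Step (ii) is the heart of the proof. The map $v\mapsto v-1$ preserves the relative order of any two values in $\{2,\dots,n\}$, so the cyclic descent status at a position $i$ is preserved whenever neither $\rho(i)$ nor $\rho(i+1)$ equals $1$. Let $j$ be the unique index with $\rho(j)=1$; the only positions whose status can change are $j-1$ and $j$ (cyclic indices). A short check will show: at position $j$, $\rho(j)=1<\rho(j+1)$ is not a descent of $\rho$, whereas $w^{-1}(\rho(j))=n>w^{-1}(\rho(j+1))$ is a descent of $\sigma$, so position $j$ is \emph{gained}; at position $j-1$, $\rho(j-1)>1=\rho(j)$ is a descent of $\rho$, but $w^{-1}(\rho(j-1))\le n-1<n=w^{-1}(\rho(j))$ is not a descent of $\sigma$, so position $j-1$ is \emph{lost}. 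These two contributions cancel, giving $\mbox{cdes}(\sigma)=\mbox{cdes}(\rho)$.

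Combining (i) and (ii) yields the $k=1$ case. For general $k\ge 0$, induction via $w^{-(k+1)}\pi w^{k+1}=w^{-1}\bigl(w^{-k}\pi w^k\bigr)w$ extends the identity, and the case $k<0$ follows by applying the positive case to $w^k\pi w^{-k}$. The principal obstacle is the case analysis at the position of the value $1$ in step (ii): one must handle the cyclic convention carefully to confirm that exactly one descent is created at position $j$ and exactly one is destroyed at position $j-1$, and to note that these positions are distinct whenever $n\ge 2$ (the statement being vacuous for $n=1$).
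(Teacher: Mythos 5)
Your proof is correct and follows essentially the same route as the paper: you decompose the conjugation into a cyclic position shift (which merely rotates $\mbox{cDes}$) and a value shift by $\pm 1 \pmod n$, and observe that the only affected positions are the two adjacent to the wrap-around value, where one cyclic descent is destroyed and one is created, so the count is unchanged. The paper tracks the value $n$ turning into $1$ in $w\pi w^{-1}$ while you track $1$ turning into $n$ in $w^{-1}\pi w$, but this is only a mirror-image presentational difference.
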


\begin{example} For $\pi= [3142 ]$.
$\mbox{cdes} (\pi )=2$. We have:

$w^{-1}\pi w= [4312 ] \Rightarrow \mbox{cdes} (w^{-1}\pi w^{1} )=2$.

$ w^{-2}\pi w^{2}= [2413 ] \Rightarrow \mbox{cdes} (w^{-2}\pi w^{2} )=2$.

$w^{-3}\pi w^{3}= [3421 ] \Rightarrow \mbox{cdes} (w^{-3}\pi w^{3} )=2$. 

$w^{4}=e$.
Therefore, the proposition holds for all $k\in\mathbb{Z}$.

\end{example}

\begin{proof}

Let $\pi= [\pi (1 ),...,\pi (n ) ]$.

It holds that $w\pi w^{-1}= [\pi (n )+1,\pi (1 )+1,...,\pi (n-1 )+1 ] (\mbox{mod}(n))$, meaning each value $\pi (i )$ shifts cyclically to the right and increases by 1 modulo $n$.

Therefore, except for the positions $\pi^{-1}(n) - 1, \pi^{-1}(n)$ in $\pi$, which moved to the positions $\pi^{-1}(n), \pi^{-1}(n) + 1$ in $w \pi w^{-1}$, there is no change in the value of $cdes$, because every number is increases by 1 (which does not change the order relation between the elements in the sequence), and the cyclic shift to the right also does not change the value, except for the aforementioned places.

In the places where the change occurred:

$\pi^{-1} (n )\in\mbox{cDes} (\pi )$ and $\pi^{-1} (n )-1\notin\mbox{cDes} (\pi )$, because $n$ is the largest number among $1,...,n$. 

The value $n$ shifts to the right, and increases by 1 mod $n$, meaning it shifted to the right and became the value $1$, which is the smallest number among $1,...,n$. therefore, we obtain that $\pi^{-1} (n )\in\mbox{cDes} (w\pi w^{-1} )$ and $\pi^{-1} (n )+1\notin\mbox{cDes} (w\pi w^{-1} )$.

Thus, $\mbox{cdes} (w\pi w^{-1} )=\mbox{cdes} (\pi )$ and hence $\mbox{cdes} (w^{-k}\pi w^{k} )=\mbox{cdes} (\pi )$ for all $k\in\mathbb{Z}$.
 
\end{proof}

\begin{prop}\label{cdes12}
Let $\pi$ be a permutation satisfying case (a) of Claim \ref{les equal 1} such that the numbers $\pi^{-1}(i),\pi^{-1}(i+1)$ are consecutive numbers, and let $k:=\min \{ \pi^{-1}(i),\pi^{-1}(i+1) \} $. Then:

(1) if $\pi^{-1}(i)<\pi^{-1}(i+1)$, then $\mbox{cdes} (w^{-k}\pi w^{k} )=\mbox{des} (w^{-k}\pi w^{k} )$.

(2) if $\pi^{-1}(i)>\pi^{-1}(i+1)$, then $\mbox{cdes} (w^{-k}\pi w^{k} )=\mbox{des} (w^{-k}\pi w^{k} )+1$.
 
\end{prop}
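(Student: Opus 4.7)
The plan is to reduce both statements to a single computation of $\sigma(1)$ and $\sigma(n)$, where $\sigma := w^{-k}\pi w^{k}$. The proof of Proposition \ref{cdes} already derives the one-line form of $w\pi w^{-1}$; iterating (or inverting) that derivation gives the shift formula
\[
\sigma(j) \equiv \pi(j+k) - k \pmod{n},
\]
with both the position $j+k$ and the value reduced to representatives in $\{1,\dots,n\}$. In particular $\sigma(n) \equiv \pi(k) - k$ and $\sigma(1) \equiv \pi(k+1) - k$. Since by definition $\mbox{cdes}(\sigma) = \mbox{des}(\sigma)$ when $\sigma(n) < \sigma(1)$ and $\mbox{cdes}(\sigma) = \mbox{des}(\sigma) + 1$ when $\sigma(n) > \sigma(1)$, the entire proposition reduces to determining the sign of $\sigma(1) - \sigma(n)$ in each of the two cases.

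The comparison is then direct. In part (1), $\pi(k)=i$ and $\pi(k+1)=i+1$, so $\sigma(n) \equiv i-k$ and $\sigma(1) \equiv i+1-k \pmod n$; the two residues differ by exactly $1$, and the only way $\sigma(n) < \sigma(1)$ can fail is a wrap-around forcing $\sigma(n)=n$, $\sigma(1)=1$, i.e.\ $k \equiv i \pmod n$. Part (2) is symmetric: $\pi(k)=i+1$ and $\pi(k+1)=i$ give $\sigma(n) \equiv i+1-k$ and $\sigma(1) \equiv i-k$, and $\sigma(n) > \sigma(1)$ fails only under the same wrap-around $k \equiv i$. This bad configuration is ruled out by the hypothesis that $\pi$ satisfies case (a) of Proposition \ref{d=012}: the condition $|D|=0$ is equivalent to $\{i,i+1\} \cap \{\pi^{-1}(i),\pi^{-1}(i+1)\} = \emptyset$, so in particular $k \neq i$, which gives the strict inequalities and hence both equalities.

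The argument itself is short, so the main obstacle I anticipate is purely the bookkeeping around the cyclic indexing. I want to state the shift formula so that both the position $j+k$ and the value $\pi(j+k)-k$ are reduced to $\{1,\dots,n\}$ consistently (so that the statement $\mbox{cdes}(\sigma) = \mbox{des}(\sigma) + [\sigma(n) > \sigma(1)]$ really applies to honest integers in $\{1,\dots,n\}$), and to verify carefully that $k \equiv i \pmod n$ is the unique residue configuration obstructing the desired strict inequality in each case, so that the one piece of information $k \neq i$ extracted from case (a) is what carries the proof through.
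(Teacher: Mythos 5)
Your proof is correct and follows essentially the same route as the paper's: both compute $\sigma(n)\equiv\pi(k)-k$ and $\sigma(1)\equiv\pi(k+1)-k \pmod{n}$ from the conjugation-by-$w$ shift formula and then compare these two consecutive residues, using case (a) to exclude the wrap-around. The only (immaterial) difference is that you isolate $k\equiv i \pmod{n}$ as the unique obstruction and need only $k\neq i$, whereas the paper rules out both residues being $n$ via the full consequence $\{i,i+1\}\cap\{k,k+1\}=\emptyset$ of case (a).
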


We can note that $w^{-1}\pi w= [\pi (2 )-1,...,\pi (n )-1,\pi (1 )-1 ](\mbox{mod} (n))$.

For example, let $i=5,\,\pi= [246531 ]$. In this case, $\pi^{-1}(5),\pi^{-1}(6)$ are consecutive numbers, and $ |D |=0$. 

We have $k=\min \{ \pi^{-1} (5 ),\pi^{-1} (6 ) \} =\min \{ 4,3 \} =3$.

For this permutation, $\pi^{-1}(5)>\pi^{-1}(6)$ (which corresponds to case (2)), we calculate the permutations $\mbox{cdes} (w^{-3}\pi w^{3} ),\mbox{des} (w^{-3}\pi w^{3} )$:

We find $w^{-3}\pi w^{3}= [264513 ]$. Thus, $\mbox{cdes} (w^{-3}\pi w^{3} )=3,\mbox{des} (w^{-3}\pi w^{3} )=2$, and then $\mbox{cdes} (w^{-3}\pi w^{3} )=\mbox{des} (w^{-3}\pi w^{3} )+1$. 

Note that the equality $\mbox{cdes} (w^{-k}\pi w^{k} )=\mbox{des} (w^{-k}\pi w^{k} )+1$ does not hold for every $k$. For example, when $k=4$, we get $\mbox{cdes} (w^{-k}\pi w^{k} )=\mbox{des} (w^{-k}\pi w^{k} )$.

\begin{proof}
We have $w^{-1}\pi w= [\pi (2 )-1,...,\pi (n )-1,\pi (1 )-1 ](\mbox{mod}(n))$.

Then $ (w^{-1}\pi w ) (t )=\pi (t+1 )-1$ for $1\le t <n$, and $ (w^{-1}\pi w ) (n )=\pi(1)-1\,(\mbox{mod}(n))$

Therefore, we get $ (w^{-k}\pi w^{k} ) (n )=\pi (k )-k,\,\,\,\, (w^{-k}\pi w^{k} ) (1 )=\pi(k+1)-k\,(\mbox{mod}(n))$.

We get $\pi^{-1}(i)<\pi^{-1}(i+1)\Longleftrightarrow k=\pi^{-1} (i )\Longleftrightarrow \pi (k )=i\Longleftrightarrow 
 \pi (k )<\pi (k+1 )$ from the definition of k. 

Moreover, $ \{ \pi (k )-k,\pi (k+1 )-k \} = \{ i-k,i+1-k \} $. Since $ \{ i,i+1 \} \cap \{ k,k+1 \} =\emptyset$ (case (a)), the two consecutive numbers $ \{ i-k,i+1-k \} $ are different from $n\, (\mbox{mod}(n) )$. Therefore, $\pi (k )<\pi (k+1 )\Leftrightarrow\pi (k )-k<\pi (k+1 )-k\,(\mbox{mod}(n))$, which implies that $\pi (k )<\pi (k+1 )\Leftrightarrow (w^{-k}\pi w^{k} ) (n )< (w^{-k}\pi w^{k} ) (1 )$. 

Thus, $\pi^{-1}(i)<\pi^{-1}(i+1) \Leftrightarrow\pi (k )<\pi (k+1 ) \Leftrightarrow (w^{-k}\pi w^{k} ) (n )< (w^{-k}\pi w^{k} ) (1 )$, as required.
 
\end{proof}

\subsection{The main result: The continuity}

\begin{prop}\label{wt}
 Let $\sigma= [t+1,...,n,1,...,t-1,t ]$ for $1\le t\le n$. 
 
 Then $\mbox{des} (s_{t}^{-1}\sigma s_{t} )=\mbox{des} (\sigma )+{1}$
\end{prop}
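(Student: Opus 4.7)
I plan to prove the identity by direct computation of $\pi := s_t^{-1}\sigma s_t = s_t\sigma s_t$. First, I would observe that $\sigma$ is strictly increasing on positions $1,\ldots,n-t$ (where it takes values $t+1,\ldots,n$) and on positions $n-t+1,\ldots,n$ (where it takes values $1,\ldots,t$), so its unique descent sits at position $n-t$; hence $\des(\sigma)=1$, and it suffices to show $\des(\pi)=2$.

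Next, using $\pi(j)=s_t(\sigma(s_t(j)))$ together with the fact that $s_t$ fixes every index and every value outside $\{t,t+1\}$, I would identify the positions where $\pi$ can differ from $\sigma$: these are the $j$ with $j\in\{t,t+1\}$, plus those $j$ with $\sigma(j)\in\{t,t+1\}$. Since $\sigma^{-1}(t)=n$ and $\sigma^{-1}(t+1)=1$, the latter contributes $j\in\{1,n\}$. Thus, generically, the only changes are $\pi(1)=t$, $\pi(n)=t+1$, $\pi(t)=s_t(\sigma(t+1))$, and $\pi(t+1)=s_t(\sigma(t))$; everywhere else $\pi$ agrees with $\sigma$.

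I would then split into three cases according to how $t$ compares with $n-t$. When $t+1\le n-t$, we have $\sigma(t)=2t$ and $\sigma(t+1)=2t+1$, both outside $\{t,t+1\}$, so $\pi(t)=2t+1>2t=\pi(t+1)$; writing $\pi$ in one-line notation shows its only descents are at position $t$ (new) and position $n-t$ (preserved from $\sigma$). When $t=n-t$ (so $n=2t$), we get $\sigma(t)=n$ and $\sigma(t+1)=1$, forcing $\pi(t)=1$ and $\pi(t+1)=n$; the old descent at $n-t=t$ is destroyed, but two new descents appear at position $t-1$ (where $2t-1>1$) and at position $t+1$ (where $n>2$). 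When $t>n-t$, symmetric to the first case, $\sigma(t)=2t-n$ and $\sigma(t+1)=2t-n+1$ produce a new descent at $t$ while preserving the descent at $n-t$. In each case, one checks that the altered values $\pi(1)=t$ and $\pi(n)=t+1$ introduce no further descents at positions $1$ or $n-1$.

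The main obstacle will be the middle case $n=2t$, where the descent of $\sigma$ does not simply survive with one extra descent added on, but migrates to two entirely new positions; this is why the proof cannot be carried out by one uniform computation. Minor additional care will be needed at the boundary values $t=1$ and $t=n-1$, where the four affected positions $\{1,t,t+1,n\}$ collapse and one of $\sigma(t),\sigma(t+1)$ happens to lie in $\{t,t+1\}$, so the generic description of $\pi$ breaks down; a short direct inspection in these degenerate situations nonetheless yields $\des(\pi)=2$, completing the proof.
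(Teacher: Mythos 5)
Your proposal is correct and takes essentially the same route as the paper: both arguments note $\des(\sigma)=1$, compute $s_t\sigma s_t$ directly in one-line notation, observe that swapping the values $t,t+1$ (which sit at the endpoints, positions $1$ and $n$) creates no new descent while swapping positions $t,t+1$ adds exactly one, and single out the case $n=2t$ (equivalently $\sigma(t)=n$) where the original descent is destroyed and two new ones appear. Your further split according to the sign of $t-(n-t)$ and the deferred checks at $t=1$ and $t=n-1$ are just a finer subdivision of the paper's case $\sigma(t)<n$.
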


\begin{proof}
First, $\mbox{des} (\sigma )={1}$. We show that $\mbox{des} (s_{t}^{-1}\sigma s_{t} )={2}$.

- If $\sigma (t )=n$: $\sigma (t+1 )=1$ so $s_{t}^{-1}\sigma s_{t}= [t,t+2...,n-1,1,n,2,...,t-1,t+1 ]$, then $\mbox{des} (s_{t}^{-1}\sigma s_{t} )=2$.

-If $\sigma (t )< n$: Since $\sigma (t )<\sigma (t+1 )$ and these numbers are consecutive numbers, the swap of the values $t,t+1$ (the permutation $\sigma''=s_{t}\sigma$) will not change the $\mbox{des}$ values, and the swapping of $\sigma (t ),\sigma (t+1 )$ will increase the $\mbox{des}$ by 1, so $\mbox{des} (s_{t}^{-1}\sigma s_{t} )=2$.

\end{proof}
 
\begin{prop}\label{aa}
 Let $\sigma\in C_n, 1\le i\le n$ such that $\sigma (n )<\sigma (1 )$ and $\mbox{des} (s_i^{-1} \sigma s_i )-\mbox{des} (\sigma )=2$ then $ (s_i^{-1} \sigma s_i )(n)< (s_i^{-1} \sigma s_i )(1)$ 
\end{prop}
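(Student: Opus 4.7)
The plan is to analyze, under the given hypothesis, how the conjugation $\sigma \mapsto \sigma' := s_i^{-1}\sigma s_i$ changes the values at positions $1$ and $n$, and show that $\sigma'(1) \ge \sigma(1)$ while $\sigma'(n) \le \sigma(n)$. The desired strict inequality $\sigma'(n) < \sigma'(1)$ will then follow from the hypothesis $\sigma(n) < \sigma(1)$ by a simple sandwiching.

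First, I would extract the structural information already supplied by the preceding results. From the proof of Corollary \ref{le2}, a jump of exactly $2$ in the des value can only occur in case (a) of Proposition \ref{d=012} with $\sigma^{-1}(i)$ and $\sigma^{-1}(i+1)$ in consecutive positions. Setting $p := \min\{\sigma^{-1}(i), \sigma^{-1}(i+1)\}$, Corollary \ref{cor i} forces $\sigma^{-1}(i) < \sigma^{-1}(i+1)$ and $\sigma(i) < \sigma(i+1)$, so $\sigma(p) = i$ and $\sigma(p+1) = i+1$. Moreover, case (a) guarantees that $\{p, p+1\} \cap \{i, i+1\} = \emptyset$, so the four positions $i, i+1, p, p+1$ are pairwise distinct. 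Writing $\sigma'$ out explicitly then gives $\sigma'(i) = \sigma(i+1)$, $\sigma'(i+1) = \sigma(i)$, $\sigma'(p) = i+1$, $\sigma'(p+1) = i$, and $\sigma'(k) = \sigma(k)$ for every $k \notin \{i, i+1, p, p+1\}$.

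The core of the argument is a short case analysis on whether the positions $1$ and $n$ meet this set of four. Since $i \ge 1$ and $p \ge 1$, we can have neither $1 = i+1$ nor $1 = p+1$; so position $1$ is either unaffected (when $1 \notin \{i, p\}$), or coincides with $p$ (giving $\sigma'(1) = i+1 > i = \sigma(1)$), or coincides with $i$ (giving $\sigma'(1) = \sigma(i+1) > \sigma(i) = \sigma(1)$ by Corollary \ref{cor i}). In every scenario, $\sigma'(1) \ge \sigma(1)$. Symmetrically, since $i+1 \le n$ and $p+1 \le n$, we can have neither $n = i$ nor $n = p$; so position $n$ is either unaffected, or equals $p+1$ (giving $\sigma'(n) = i < i+1 = \sigma(n)$), or equals $i+1$ (giving $\sigma'(n) = \sigma(i) < \sigma(i+1) = \sigma(n)$). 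Thus $\sigma'(n) \le \sigma(n)$.

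Combining the two one-sided monotonicities yields the chain $\sigma'(n) \le \sigma(n) < \sigma(1) \le \sigma'(1)$, and the middle strict inequality delivers $\sigma'(n) < \sigma'(1)$. I do not anticipate any real obstacle here; the one point requiring care is the boundary observation that prevents $1$ from being the "right" member of either special pair $(i,i+1)$ or $(p,p+1)$ and $n$ from being the "left" member, as this is precisely what forces the change at each endpoint to be one-sided and compatible with the hypothesis.
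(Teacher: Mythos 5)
Your proof is correct, and it rests on the same underlying mechanism as the paper's: under the hypothesis one is forced into case (a) of Proposition \ref{d=012} with $\sigma^{-1}(i),\sigma^{-1}(i+1)$ consecutive, Corollary \ref{cor i} gives $\sigma^{-1}(i)<\sigma^{-1}(i+1)$ and $\sigma(i)<\sigma(i+1)$, and one then tracks how the values at positions $1$ and $n$ can move. The organization differs, though, and yours is arguably the cleaner of the two. The paper splits into three cases according to the location of $i$ (interior, $i=n-1$, $i=1$) and argues each endpoint configuration separately, leaving the key step in its first case (``if only one of them is $i$ or $i+1$, then it still holds'') somewhat implicit. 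You instead isolate the four affected positions $i,i+1,p,p+1$, observe that position $1$ can only be a ``left'' member of either pair and position $n$ only a ``right'' member, and deduce the two one-sided monotonicities $\sigma'(1)\ge\sigma(1)$ and $\sigma'(n)\le\sigma(n)$, which sandwich the conclusion in one line. This buys a uniform treatment with no boundary cases and makes explicit exactly why the endpoint changes are always favorable; the paper's version is closer to a direct verification. Both are complete proofs.
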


\begin{proof}
 denote $\sigma_i=s_i^{-1} \sigma s_i$. From \ref{cor i} we have $\sigma^{-1} (i )<\sigma^{-1} (i+1 )$
 
(1) If $1<i,i+1<n$:

$\sigma (1 ),\sigma (n )$ did not change when transitioning from $\sigma$ to $\sigma_{i}$, unless at least one of them is $i$ or $i+1$. If only one of them is $i$ or $i+1$, then it still holds that $\sigma_{i} (n )<\sigma_{i} (1 )$.

It cannot be that both of them are $i,i+1$ because then $\sigma (n )=i,\sigma (1 )=i+1$ due to $\sigma (n )<\sigma (1 )$. This contradicts our earlier conclusion that $\sigma^{-1} (i )<\sigma^{-1} (i+1 )$. 

(2) If $i+1=n$, i.e., $i=n-1$:


From \ref{cor i} we have $\sigma (n-1 )<\sigma (n )$, therefore, from the given, $\sigma(n)<\sigma(1)$, it follows that $\sigma(n-1)<\sigma(1)$.

$\sigma (n-1 )=\sigma_{i} (n )$ and $\sigma (n )=\sigma_{i} (n-1 )$, then $\sigma_{i} (n )=\sigma (n-1 )<\sigma (1 )$. 

Furthermore, $\sigma_{i}(1)=\sigma(1)$ or $\sigma_{i}(1)=\sigma(1)+1$, so overall $\sigma_{i} (n )<\sigma_{i} (1 )$.

(3) If $i=1$:

$\sigma= [\sigma (1 ),\sigma (2 ),...,1,2,... ] $, then $\sigma_{i} (2 )=\sigma (1 )<\sigma (2 )=\sigma_{i} (1 )$ from the same reasoning as in case (2). 
Hence, $\sigma_{i} (n )\le\sigma (n )<\sigma (1 )=\sigma_{i} (2 )<\sigma_{i} (1 )$ (we get $\sigma^{-1} (1 )<\sigma^{-1} (2 )$ as mentioned above, because $i=1$. If $\sigma (n )\ne2$ then $\sigma_{i} (n )=\sigma (n )$. If $\sigma (n )=2$ then $\sigma_{i} (n )=1<2=\sigma (n )$. Therefore, in total, $\sigma_{i} (n )<\sigma_{i} (1 )$.

\end{proof}

Now we proceed to prove Theorem \ref{bigthm}:

\begin{proof}[proof for \ref{bigthm}]
As stated at the beginning, it remains to show that given $\sigma_{0},\sigma_{2}\in {C}_n$ such that, $\sigma_{2}=s_{i}^{-1}\sigma_{0}s_{i}$, and $ \mbox{des} (\sigma_{2} )-\mbox{des} (\sigma_{0} )=2$ (where $s_{i}= (i,i+1 )$ is a Coxeter generator), there exists a permutation $\sigma_{1}\in {C}_n$ such that $\mbox{des} (\sigma_{1} )-\mbox{des} (\sigma_{0} )=1$, and this will complete the proof. 

According to Claim \ref{les equal 1}, if $\mbox{des} (\sigma_{2} )-\mbox{des} (\sigma_{0} )=2$, then we are in case (a) of \ref{d=012}, where $\sigma_{0}^{-1} (i ),\sigma_{0}^{-1} (i+1 )$ are consecutive. Therefore, we will only address this case. 

If $\sigma_0\ne w^t$ for all $1\le t\le n$, Define $\sigma_{1}$ as one of the conjugations $w^{-k}\sigma_{0}w^{k}$ or $w^{-k}\sigma_{2}w^{k}$, where \(k\) and the choice between the two conjugations will be determined as follows:

According to $ \mbox{des} (\sigma_{2} )-\mbox{des} (\sigma_{0} )=2$, we get from \ref{cor i} that $\sigma_{0}^{-1} (i )<\sigma_{0}^{-1} (i+1 )$. 

We distinguish two cases:

a) $\sigma_{0} (n )>\sigma_{0} (1 )$:

In this case, $\mbox{cdes} (\sigma_{0} )=\mbox{des} (\sigma_{0} )+1$.

Since $\sigma_{0}^{-1} (i )<\sigma_{0}^{-1} (i+1 )$ according to Lemma \ref{cdes12}, for $k=\sigma_{0}^{-1}(i)$ we obtain that $w^{-k}\sigma_{0}w^{k}$ satisfies $\mbox{cdes} (w^{-k}\sigma_{0}w^{k} )=\mbox{des} (w^{-k}\sigma_{0}w^{k} )$. 

Then, we get $\mbox{des} (w^{-k}\sigma_{0}w^{k} )=\mbox{cdes} (w^{-k}\sigma_{0}w^{k} )=\mbox{cdes} (\sigma_{0} )=\mbox{des} (\sigma_{0} )+1$, according to Lemma \ref{cdes}. 

Therefore $\sigma_{1}=w^{-k}\sigma_{0}w^{k}\in C_n$ satisfies the requirement.

b) $\sigma_{0} (n )<\sigma_{0} (1 )$:

In this case, $c\mbox{des} (\sigma_{0} )=\mbox{des} (\sigma_{0} )$. 

In \ref{aa} We get $\sigma_{2} (n )<\sigma_{2} (1 )$, 
Moreover $\sigma_{2}^{-1} (i )>\sigma_{2}^{-1} (i+1 )$ holds, since the values $i,i+1$ were swapped, and $\sigma_{0}^{-1} (i )<\sigma_{0}^{-1} (i+1 )$. Therefore, the permutation $w^{-k}\sigma_{2}w^{k}$  satisfies $\mbox{cdes} (w^{-k}\sigma_{2}w^{k} )=\mbox{des} (w^{-k}\sigma_{2}w^{k} )+1$, where $k=\sigma_{2}^{-1}(i+1)$, according to \ref{cdes12}. 

Hence,  $\mbox{des} (w^{-k}\sigma_{2}w^{k} )=\mbox{cdes} (w^{k}\sigma_{2}w^{-k} )-1=\mbox{cdes} (\sigma_{2} )-1=\mbox{des} (\sigma_{2} )-1=\mbox{des} (\sigma_{0} )+1$, by \ref{cdes}. Thus, $\sigma_{1}=w^{-k}\sigma_{2}w^{k}\in {C}_n$ satisfies the required condition.

If $\sigma_0=w^t$ for some $1\le t\le n$, we can see that $w= [t+1,...n,1,...t-1,t ]$, and then by \ref{wt} we take $\sigma_1=s_{t}^{-1}\sigma s_{t}$ (Note that $\mbox{des} (w^{t} )=1$ for all $1\le t<n$, and then $\sigma_{2}\ne w^{t}$, thus only for $\sigma_{0}$ we assume it is different from \(w^{t}\)).

Overall, we have established that if there is a jump of 2 in the transition from permutation \(\sigma_{0}\) to \(\alpha_{j+1} \sigma_{0} \alpha_{j+1}^{-1}\), then it can be resolved by conjugating \(\sigma_{0}\) or \(\sigma_{2}\) with the permutation \(w^{k}\) for some appropriate \(k \in \mathbb{Z}\) or with $s_i$, and this permutation is \(\sigma_{1}\).

\end{proof}

\section*{Acknowledgments}
This paper is based on a part of the doctoral thesis of the author, supervised by Ron Adin and Yuval Roichman. The author thanks them for stimulating discussions and fruitful ideas.

\end{document}